\newtheorem{theorem}{Theorem}
\newtheorem{proposition}{Proposition}
\begin{document}

\title{A business dinner problem}

\author{Alejandra Estanislao}
\address[A. Estanislao]{\url{https://www.linkedin.com/in/aleestanislao} }

\author{Fr\'ed\'eric Meunier}
\address[F. Meunier]{CERMICS, \'Ecole des Ponts ParisTech, France}
\email{\href{mailto:frederic.meunier@enpc.fr}{frederic.meunier@enpc.fr}}

\begin{abstract}
We are given suppliers and customers, and a set of tables. Every evening of the forthcoming days, there will be a dinner. Each customer must eat with each supplier exactly once, but two suppliers may meet at most once at a table. The number of customers and the number of suppliers who can sit together at a table are bounded above by fixed parameters. What is the minimum number of evenings to be scheduled in order to reach this objective? This question was submitted by a firm to the Junior company of a french engineering school some years ago.
Lower and upper bounds are given in this paper, as well as proven optimal solutions with closed-form expressions for some cases.

\end{abstract}

\keywords{Howell designs; linear programming; meeting scheduling; optimization}

\maketitle

\section{Introduction}

\subsection{Context}

In 2009, the following problem was submitted to the ``Junior company'' of the \'Ecole des Ponts -- one of the french engineering schools --  by a firm. We are given a set $S=\{1,\ldots,s\}$ of suppliers, a set $C=\{1,\ldots,c\}$ of customers, and a room with $t$ tables. Each evening of the forthcoming days, there will be a dinner. During a dinner, suppliers and customers sit at tables, in such a way that there are at most $\sigma\geq 1$ suppliers and at most $\gamma\geq 1$ customers sitting at the same table. Two suppliers can sit at most once at the same table. There is no similar restriction for two customers: two customers can sit as often as they want at the same table. Each customer and each supplier must sit at the same table exactly once, but not necessarily next to each other. Find a schedule (for each dinner, describe for each table the suppliers and the customers who sit at it) satisfying these constraints and minimizing the number of dinners. Note that there is always a feasible solution since whenever a supplier and a customer have not yet sat together at a table, we can add an additional dinner to allow the missing meeting.

An example of scheduling with $t=2$, $s=5$, $c=6$, $\sigma=2$, and $\gamma=3$ is given in Table~\ref{tab:ex}. The proposed solution requires six dinners. Theorems~\ref{thm:lower} and~\ref{thm:upper} below show that the optimal solution of Table~\ref{tab:ex} consists actually in three dinners.

We call this problem the {\em Business Dinner Problem}.

\subsection{Related works}\label{subsec:literature}

To authors knowledge, the Business Dinner Problem is new and has not yet been studied. However, there are some links with other problems, mainly from the combinatorial design theory.

In the Kirkman Schoolgirl Problem (see \cite{AbGeYi06}), fifteen schoolgirls must be split into five groups of three girls each day of the week, so that no two girls are in the same group twice. This problem is known to have a solution. The condition on the girls resembles the condition on the suppliers with the ``groups'' playing the role of the tables. However, it is not clear how to build from a feasible solution of the Kirkman Schoolgirl Problem a feasible solution to the Business Dinner Problem (which should then have $t=5$ tables, $s=15$ suppliers, at most $\sigma=3$ suppliers per table, and $c/\gamma\leq 7$), since there is no counterpart to the customers in the Kirkman Schoolgirl Problem.

The Social Golfer Problem is a generalization of the Kirkman Schoolgirl Problem for other values of these parameters: given $s=\sigma s'$ golfers and a time period of $r$ days, we have to split each day the golfers into $s'$ groups of $\sigma$ golfers so that no two golfers are in the same group twice. Again, it is not clear how to build from a feasible solution of the Social Golfer Problem a feasible solution to the Business Dinner Problem (which should then have $t=s'$ tables, $s$ suppliers, at  most $\sigma$ suppliers per table, and $c/\gamma\leq r$), again because there is no counterpart to the customers. The Social Golfer Problem belongs to the more general family of Tournament Scheduling Problems; see~\cite{DiFrLaWa06}. \\

There is a combinatorial object strongly connected with our problem. A square array of size $m\times m$ is a {\em Howell design} of type $H(m,2n)$, with $n\geq 1$, provided that 
\begin{enumerate}
\item every cell is either empty or contains an unordered pair of elements ({\em symbols}) chosen from a set of size $2n$,
\item every symbol occurs exactly once in each row and each column,
\item every unordered pair of symbols occurs at most once in the array.
\end{enumerate}
A Howell design provides then a feasible schedule of the Business Dinner Problem for $\lceil c/\gamma\rceil=m$, $s=2n$, $\sigma=2$, and $t\geq n$: the rows of the array are the dinners, the columns are the customer groups (of $\gamma$ or less customers), and in each cell of the array, we find suppliers present for this dinner with this customer group at a table.

The following theorem was obtained after a series of papers from the early sixties until the eighties. Two papers~\cite{ASS84,St82} conclude this series with a complete characterization of the values for which Howell designs exist.

\begin{theorem}\label{thm:howell}
A Howell design $H(m,2n)$ exists if and only if the following two conditions are satisfied:
\begin{enumerate}
\item $n\leq m\leq 2n-1$
\item $(m,2n)\notin\{(2,4),(3,4),(5,6),(5,8)\}$
\end{enumerate}
\end{theorem}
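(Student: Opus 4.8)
The plan is to treat the two directions separately, with necessity being the short part. For necessity I would count. Fix any row: by condition~(2) each of the $2n$ symbols appears there exactly once, and by condition~(1) each nonempty cell carries two of them, so the row has exactly $n$ nonempty cells; since a row has $m$ cells this forces $m\geq n$. For the upper bound, the array holds one unordered pair per nonempty cell, hence $mn$ pairs in total, and by condition~(3) they are distinct; as only $\binom{2n}{2}=n(2n-1)$ pairs exist, $mn\leq n(2n-1)$, that is $m\leq 2n-1$. This yields condition~(1). The four pairs excluded in condition~(2) are finite obstructions to be ruled out one by one; I would note that $(3,4)$ and $(5,6)$ are exactly the classical non-existence of Room squares of sides $3$ and $5$.

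For sufficiency I would use the picture in which each row, and likewise each column, is a perfect matching of the complete graph $K_{2n}$ on the symbol set: condition~(2) says each row is a one-factor, and condition~(3) says the rows are pairwise edge-disjoint, so the array encodes two orthogonal families of $m$ edge-disjoint one-factors. The two extreme values of $m$ are the most structured and serve as anchors: at $m=2n-1$ the rows form a complete one-factorization of $K_{2n}$ and the object is precisely a Room square of side $2n-1$, while $m=n$ is the densest case, a Howell design with no empty cells. For such structured families I would use the starter--adder method: place the symbols on a suitable abelian group, write down a single starter together with an adder, and develop this data under the group, so that conditions~(2) and~(3) reduce to prescribed difference properties of the starter and adder.

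The core of the argument, and the step I expect to be the real obstacle, is covering every intermediate $m$ with $n<m<2n-1$. For this I would lean on recursive constructions: a product-type construction that fuses an $H(m_1,2n_1)$ and an $H(m_2,2n_2)$, with the aid of auxiliary mutually orthogonal Latin squares, into a larger design; and a filling-in-holes construction that begins with a Howell design carrying disjoint subdesigns on symbol subsets and substitutes a smaller design into each hole. Seeded with a finite library of small, directly built designs, these recursions should propagate existence across the whole admissible window. The delicate point is purely one of coverage and bookkeeping: one must check that the recursions reach \emph{every} admissible pair while leaving \emph{exactly} the four stated exceptions, and each boundary and small case has to be settled individually. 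This global verification across the entire family, rather than any single clever construction, is where the difficulty concentrates, and it is why the characterization was reached only gradually over a long series of papers.
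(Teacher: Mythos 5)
Your statement is one the paper does not prove at all: Theorem~\ref{thm:howell} is imported from the design-theory literature as the culmination of a series of papers, with the two concluding references \cite{ASS84,St82} (Stinson settling even side, Anderson--Schellenberg--Stinson odd side), so there is no internal proof to compare against. Judged on its own terms, your necessity argument is correct and complete: each row contains every symbol exactly once and two symbols per nonempty cell, so it has exactly $n$ nonempty cells and $m\geq n$; the $mn$ pairs in the array are distinct, so $mn\leq\binom{2n}{2}=n(2n-1)$ and $m\leq 2n-1$. Your identification of $H(3,4)$ and $H(5,6)$ with the nonexistent Room squares of sides $3$ and $5$ is also the right way to see two of the four exceptions, though you do not actually rule out any of them: $(2,4)$ is a short finite check, but $(5,8)$ is a genuinely sporadic case whose elimination historically required a substantial ad hoc argument, and ``to be ruled out one by one'' is a placeholder, not a proof.

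The genuine gap is the sufficiency direction, and you concede as much yourself. Naming starters and adders for the anchor cases $m=2n-1$ and $m=n$, and invoking product-type and filling-in-holes recursions seeded by a library of small designs, correctly describes the \emph{architecture} of the literature proof, but none of it is executed: no starter is exhibited, no recursion is specified precisely enough to verify that conditions~(2) and~(3) of the definition are preserved, and---the point where the entire difficulty lives---there is no verification that the recursions reach every pair $(m,2n)$ with $n\leq m\leq 2n-1$ while missing exactly the four exceptions. That coverage argument is the content of two full papers, not a bookkeeping remark. As written, your proposal establishes only the ``only if'' direction together with a research program for ``if''; within the present paper the appropriate treatment is exactly what the authors chose, namely to quote the result and cite \cite{ASS84,St82}.
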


In Section~\ref{subsec:howell}, Howell designs will be used to derive upper bounds for our problem. \\

There is also a family of problems that seem closely related to ours, at least by the original formulation. It is the Oberwolfach Problem and its relatives. The Oberwolfach Problem~\cite{Gu71} first posed by Ringel in 1967 asks whether it is possible to seat an odd number $n$ of mathematicians at $m$ round tables of sizes $(\ell_1,\ldots,\ell_m)$ with $\sum_{i=1}^m\ell_i = n$ in $(n-1)/2$ dinners so that each mathematician sits next to everyone else exactly once. Equivalently, this problem consists in decomposing the complete graph $K_n$ into pairwise edge-disjoint copies of a given $2$-regular graph. The conjecture is that except for some exceptional values of the $\ell_i$'s, such a decomposition always exists; see~\cite{BrRo07} or~\cite{Tr13} for the last results on this problem. A maybe more closely related problem is the Bipartite Analogue of the Oberwolfach Problem which consists in decomposing the bipartite complete graph $K_{n,n}$ into pairwise edge-disjoint copies of a given $2$-regular graph. The complete answer was given by Piotrowski in 1991~\cite{Pi91}. It models the case when there are two delegations, each mathematician must sit between mathematicians of the other delegation, and each mathematician sits next to every mathematician of the other delegation exactly once. Generalizations of this problem for complete $n$-partite graphs are also studied; see~\cite{Li00} and~\cite{Li03}.  Another variation of the Bipartite Analogue is given by the Oberwolfach Rectangular Table Negotiation Problem for which the tables are rectangular, each delegation sits along one of the long sides of the tables, and each mathematician sits near to each mathematician of the other delegation exactly once, where ``near'' means the opposite position, the position to the right of the opposite one, or the position to the left of the opposite one. The Oberwolfach Rectangular Table Negotiation Problem has been studied in~\cite{ChMu12,Fr09,FrKu11}.

Despite the close original formulation, the Oberwolfach Problem and its variations do not seem to provide feasible solutions for the Business Dinner Problem. There is indeed a notion of neighbors at tables that is not present in the Business Dinner Problem. Moreover, and it holds also for the other problems cited above, the Business Dinner Problem is an {\bf optimization} problem, for which there are feasible solutions, whereas the problems above deal generally with the {\bf existence} of some object. This difference is also one of the motivations of our work.

\begin{table}
\begin{center}
\begin{tabular}{|l|ll|ll|} 
\hline
dinner $1$: & table $1$: &
\begin{tabular}{ll} suppliers: & 1,2\\
customers: & 1,2,3
\end{tabular} & table $2$: &
\begin{tabular}{ll} suppliers: & 3,4\\
customers: & 4,5
\end{tabular} \\
\hline

dinner $2$: & table $1$: &
\begin{tabular}{ll} suppliers: & 3\\
customers: & 2
\end{tabular} & table $2$: &
\begin{tabular}{ll} suppliers: & 5\\
customers: & 5
\end{tabular} \\
\hline

dinner $3$: & table $1$: &
\begin{tabular}{ll} suppliers: & 2\\
customers: & 4,5
\end{tabular} & table $2$: &
\begin{tabular}{ll} suppliers: & 4\\
customers: & 2,6
\end{tabular} \\ \hline

dinner $4$: & table $1$: &
\begin{tabular}{ll} suppliers: & 3\\
customers: & 1,3
\end{tabular} & table $2$: &
\begin{tabular}{ll} suppliers: & 5\\
customers: & 2,6
\end{tabular} \\ \hline

dinner $5$: & table $1$: &
\begin{tabular}{ll} suppliers: & 1,5\\
customers: & 4
\end{tabular} & table $2$: &
\begin{tabular}{ll} suppliers: & 2,3\\
customers: & 6
\end{tabular} \\ \hline

dinner $6$: & table $1$: &
\begin{tabular}{ll} suppliers: & 4,5\\
customers: & 1,3
\end{tabular} & table $2$: &
\begin{tabular}{ll} suppliers: & 1\\
customers: & 5,6
\end{tabular} \\ \hline

\end{tabular}
\end{center}
\caption{A feasible schedule in six dinners for an instance with $t=2$ tables, $s=5$ suppliers, $c=6$ customers, at most $\sigma=2$ suppliers per table, and at most $\gamma=3$ customers per table.}
\label{tab:ex}
\end{table}

\subsection{Main results}

Note first that if $c\leq\gamma$, the optimal solution is easy; see Section~\ref{subsec:cleqgamma}. 

As it will become clear in the sequel, describing an optimal solution in the general case is a very difficult task since it contains as special cases open questions of the theory of combinatorial designs. However, we are able to provide the following non-trivial lower bounds and feasible solutions in the general case. 

\begin{theorem}\label{thm:lower}
Consider the Business Dinner Problem with $t$ tables, $s$ suppliers, $c$ customers, at most $\sigma$ suppliers, and at most $\gamma$ customers simultaneously at a table. The following expressions are lower bounds when $\gamma<c$.
$$
\begin{array}{l}
\displaystyle{lb_1=\left\lceil\frac{s}{\sigma}\right\rceil,\qquad lb_2=\left\lceil\frac{c}{\gamma}\right\rceil,\qquad lb_3=\left\lceil\frac{s}{t\sigma}\left\lceil\frac{c}{\gamma}\right\rceil\right\rceil}, \\
\displaystyle{lb_4=\left\lceil\frac{\sqrt{s}}{t\gamma}\left((c-\gamma)\max\left(\sqrt{\frac{\gamma}{c-\gamma}},1\right)+\frac{\gamma}{\max\left(\sqrt{\frac{\gamma}{c-\gamma}},1\right)}\right)\right\rceil,}\\
\displaystyle{lb_5=\max_{j\in\{2,\ldots,\sigma\}}\left\lceil\frac{s}{t} \left(\frac 2 {j} \left\lceil\frac{c}{\gamma}\right\rceil-\frac{s-1}{j(j-1)}\right)\right\rceil}.
\end{array}
$$
\end{theorem}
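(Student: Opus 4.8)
The plan is to treat all five bounds through a common bookkeeping. For every (dinner, table) pair actually used I would record the number $n$ of suppliers and the number $m$ of customers seated there, so that $n\le\sigma$, $m\le\gamma$, and I would track four global quantities: the total number of supplier--customer meetings $\sum nm=sc$ (each of the $sc$ pairs meets exactly once), the supplier-pair budget $\sum\binom n2\le\binom s2$ (two suppliers meet at most once), the supplier--table incidence count $\sum n$, and the customer--table incidence count $\sum m=\sum_k D_k$, where $D_k$ is the number of dinners customer $k$ attends. The elementary conversions are that a dinner offers at most $t$ tables, hence at most $t\sigma$ supplier seats and at most $t\gamma$ customer seats, so $\sum n\le t\sigma D$ and $\sum m\le t\gamma D$.

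The two atomic bounds are immediate. Fixing one customer, it meets at most $\sigma$ new suppliers per dinner and must meet all $s$, giving $lb_1=\lceil s/\sigma\rceil$; fixing one supplier, it meets at most $\gamma$ new customers per dinner and must meet all $c$, giving $lb_2=\lceil c/\gamma\rceil$. For $lb_3$ I would note that each supplier must appear at at least $\lceil c/\gamma\rceil$ tables (it meets $c$ customers, at most $\gamma$ at a time), so $\sum n\ge s\lceil c/\gamma\rceil$; combined with $\sum n\le t\sigma D$ this is exactly $lb_3$. For $lb_5$ the plan is a tangent-line (LP-duality) trick sharpening $lb_3$ by the pair budget: for every integer $n\ge0$ and every $j\in\{2,\dots,\sigma\}$ one has $1\ge\frac{2}{j}\,n-\frac{2}{j(j-1)}\binom n2$, with equality exactly at $n\in\{j-1,j\}$ (the quadratic $n^2-(2j-1)n+j(j-1)$ has roots $j-1$ and $j$). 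Summing over all tables and inserting $\sum n\ge s\lceil c/\gamma\rceil$ and $\sum\binom n2\le\binom s2$ bounds the number of tables below; dividing by $t$ and optimizing over $j$ yields $lb_5$ verbatim.

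The genuinely hard bound is $lb_4$, whose crux is a structural, not merely a counting, observation. Let $G$ be the largest supplier group $n$ occurring anywhere in the schedule, realized by a table with supplier set $S^\ast$, $|S^\ast|=G$. Since all pairs inside $S^\ast$ have now met, no other table may contain two members of $S^\ast$; consequently every customer that was not at that table (there are at least $c-\gamma$ of them) must meet the $G$ suppliers of $S^\ast$ in $G$ distinct groups, so $D_k\ge G$ for each such customer. On the other hand every customer has all its groups of size at most $G$, so $D_k\ge s/G$ for all $c$ customers. Writing $G=\alpha\sqrt s$ I would combine these into $\sum_k D_k\ge\sqrt s\big((c-\gamma)\max(\alpha,1/\alpha)+\gamma/\alpha\big)$, use $\sum_k D_k\le t\gamma D$, and then minimize the bracket over $\alpha>0$. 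The minimizer is $\alpha=\max(\sqrt{\gamma/(c-\gamma)},1)$ — the clamp to $\alpha\ge1$ appears precisely because for $\alpha\le1$ the term $\max(\alpha,1/\alpha)=1/\alpha$ forces the bracket to be at least $c$ — and substituting it reproduces $lb_4$.

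The main obstacle, and the step I expect to require the most care, is isolating this "a large clique of suppliers forces every absent customer to spread out" phenomenon and verifying that the ensuing minimization over the scale $\alpha$ collapses to exactly the clamped expression appearing in $lb_4$; the remaining manipulations are routine, and each bound is rounded up at the end since $D$ is an integer.
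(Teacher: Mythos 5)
Your proof is correct for all five bounds, and it splits naturally into two parts relative to the paper. For $lb_1$, $lb_2$, and $lb_4$ you essentially reproduce the paper's arguments: your maximal supplier group $S^\ast$ of size $G$, the observation that no other table may ever contain two members of $S^\ast$, and the consequent $D_k\geq G$ for the at least $c-\gamma$ absent customers is exactly the paper's counting proposition, with your $G=\alpha\sqrt{s}$ and the clamp $\alpha\geq 1$ replacing the paper's case analysis according to the relative positions of $z$ and $\sqrt{s}$; your direct substitution of $\gamma$ for the number $x$ of customers at the maximal table is legitimate because $(c-x)\max(G,s/G)+x\,s/G$ is non-increasing in $x$, which is the same monotonicity the paper invokes when it minimizes over $1\leq x\leq\gamma$.

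Where you genuinely diverge is on $lb_3$ and $lb_5$. The paper derives both as the optimal value of a linear program with per-supplier constraints $\sum_j x_{i,j}\geq\lceil c/\gamma\rceil$ and $\sum_j (j-1)x_{i,j}\leq s-1$, and it computes the optimum by strong duality, partitioning the $\mu$-axis into the intervals $\bigl[\frac{1}{(k+1)k},\frac{1}{k(k-1)}\bigr]$ and optimizing on each. Your per-table inequality $1\geq\frac{2}{j}n-\frac{2}{j(j-1)}\binom{n}{2}$, i.e.\ $(n-j)(n-j+1)\geq 0$ for integer $n$, is precisely the paper's optimal dual point $\mu=\frac{1}{j(j-1)}$, $\lambda=\frac{2}{j}$ (one checks $(j'-1)\mu+\frac{1}{j'}\geq\frac{2}{j}$ for all $j'$, with equality at $j'\in\{j-1,j\}$, matching your equality cases) converted into an explicit counting certificate; likewise your separate chain $s\lceil c/\gamma\rceil\leq\sum n\leq t\sigma D$ for $lb_3$ corresponds to the dual point $\mu=0$, $\lambda=\frac{1}{\sigma}$. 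Your route buys brevity and elementariness: no LP, no strong duality, no interval analysis, and the tightness of the tangent line at $n\in\{j-1,j\}$ comes for free. The paper's LP formulation buys the converse information, namely that $\max(lb_3,\max_j lb_5(j))$ is the \emph{best} bound extractable from those two counting constraints (it is the exact LP optimum, not merely one feasible dual value per $j$), which your family of tangent lines, taken alone, does not establish.
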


The lower bound $lb_5$ can be made more explicit by a simple function study. The maximum of the expression over $j$ is obtained for $j=j^*$ or $j=j^*+1$ with
$$j^*=\left\lfloor\frac{1}{1-\sqrt{\frac{s-1}{s-1+2\lceil c/\gamma\rceil}}}\right\rfloor.$$ If $j^*\in[2,\sigma]$, $lb_5$ is obtained for $j=j^*$. If $j^*<2$, $lb_5$ is obtained for $j=2$. If $j^*>\sigma$, $lb_5$ is obtained for $j=\sigma$.

None of these lower bounds is strictly better than the others. The following table gives explicit values for parameters of the problem for which each of the lower bounds strictly dominates the others (indicated with a star $^*$). 

$$\begin{array}{c|ccccc}
(t,s,c,\sigma,\gamma) & lb_1 & lb_2 & lb_3 & lb_4 & lb_5 \\
\hline
(5,8,8,1,2) & 8^* & 4 & 7 & 3 & 0 \\
(6,8,8,2,1) & 4 & 8^* & 6 & 4 & 6 \\ 
(1,8,8,1,1) & 8 & 8 & 64^* & 23 & 0 \\
(1,11,8,6,4) & 2 & 2 & 4 & 7^* & 4 \\
(1,8,11,2,1) & 4 & 11 & 44 & 32 & 60^* 
\end{array}$$

\begin{theorem}\label{thm:upper}
Consider the Business Dinner Problem with $t$ tables, $s$ suppliers, $c$ customers, at most $\sigma$ suppliers, and at most $\gamma$ customers simultaneously at a table. The following expression is an upper bound when $(\lceil c/\gamma\rceil,s) \neq (2,4)$:
$$ub_1=\left\lceil\frac 2 {\sigma} \right\rceil\left\lceil\frac 1 t \min\left(\left\lceil\frac c \gamma\right\rceil,s\right)\right\rceil\max\left(\left\lceil\frac c {\gamma}\right\rceil,\left\lceil \frac s  2\right\rceil\right) \, .$$
If $(\lceil c/\gamma\rceil,s) = (2,4)$, then this upper bound is (the last term becomes a $3$)
$$ub_1 =  3 \left\lceil\frac 2 {\sigma} \right\rceil\left\lceil\frac 1 t \min\left(\left\lceil\frac c \gamma\right\rceil,s\right)\right\rceil \, .$$
If $\lceil s/\sigma\rceil\leq\lceil c/\gamma\rceil$, the following expression is also an upper bound
$$ub_2=\left\lceil\frac{1}{t}\left\lceil\frac{s}{\sigma}\right\rceil\right\rceil\left(1-\sigma+\sigma\max\left(\left\lceil\frac{c}{\gamma}\right\rceil,2\left\lceil\frac{s}{\sigma}\right\rceil\right)\right) \, .$$
Moreover, there are explicit solutions matching these upper bounds.
\end{theorem}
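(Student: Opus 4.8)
The plan is to prove both bounds constructively, exhibiting for each an explicit schedule whose number of dinners equals the claimed value. In every case I first replace the $c$ customers by $\lceil c/\gamma\rceil$ \emph{customer groups} of at most $\gamma$ members, padding the last group with fictitious customers if $\gamma\nmid c$; because two customers are never constrained, a supplier meets an entire group precisely when it sits with all its members, so it suffices to schedule meetings between suppliers and these groups. An abstract schedule is then recorded as an array whose rows are \emph{rounds} and whose cells tell, for each customer group, which suppliers sit with it. A round is turned into genuine dinners by treating its occupied tables $t$ at a time; since a customer group occupies at most one table per dinner and there are $s$ suppliers, a round has at most $\min(\lceil c/\gamma\rceil,s)$ occupied tables, so realizing one round costs at most $\lceil\frac{1}{t}\min(\lceil c/\gamma\rceil,s)\rceil$ dinners.

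For $ub_1$ I would first treat suppliers in pairs, reducing to $\sigma=2$, and invoke Howell designs (Theorem~\ref{thm:howell}). Taking $m=\max(\lceil c/\gamma\rceil,\lceil s/2\rceil)$ and letting $2n$ be $s$ rounded up, I pad with fictitious suppliers so that $n\leq m\leq 2n-1$ holds and $(m,2n)$ avoids the four excluded pairs; an $H(m,2n)$ then yields a square array in which, by conditions (2) and (3), every supplier meets every customer group and no pair of suppliers meets twice, using exactly $m$ rounds. Deleting the dummy columns and realizing each round as above gives the two remaining factors, the round count $\max(\lceil c/\gamma\rceil,\lceil s/2\rceil)$ and the per-round cost. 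When $\sigma=1$ each table holds a single supplier, so I split every supplier pair of the $\sigma=2$ schedule into its two members and schedule them separately; this at most doubles the total, which is exactly the effect of the factor $\lceil 2/\sigma\rceil$ (equal to $1$ as soon as $\sigma\geq 2$). Larger values of $\sigma$ are not exploited here, which is why a separate bound is needed.

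For $ub_2$, valid when $\lceil s/\sigma\rceil\leq\lceil c/\gamma\rceil$, I would instead seat $\sigma$ suppliers per table and, after padding $s$ to a multiple of $\sigma$, build a resolvable structure on the $\lceil s/\sigma\rceil$ supplier groups in the spirit of the Social Golfer Problem: each round partitions the suppliers into groups of $\sigma$, no two suppliers share a group twice, and the groups are rotated against the customer groups so that each supplier eventually meets each of them. Each round occupies $\lceil s/\sigma\rceil$ tables, giving the factor $\lceil\frac{1}{t}\lceil s/\sigma\rceil\rceil$, while the number of rounds needed to have every supplier reach every customer group without repeating any of the $\binom{\sigma}{2}$ intra-table pairs works out to $1-\sigma+\sigma\max(\lceil c/\gamma\rceil,2\lceil s/\sigma\rceil)$. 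The hypothesis $\lceil s/\sigma\rceil\leq\lceil c/\gamma\rceil$ is what guarantees enough customer groups to absorb the supplier groups produced in each round.

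The main obstacle I anticipate is the existence of the underlying designs with exactly the required dimensions. Theorem~\ref{thm:howell} forbids four small pairs and imposes $n\leq m\leq 2n-1$, so the padding of suppliers (and the symmetric deletion of dummy customer groups) must be tuned to land inside the feasible range without inflating $m$ beyond $\max(\lceil c/\gamma\rceil,\lceil s/2\rceil)$, with the exceptional pairs handled by ad hoc schedules or slightly more padding. The analogous difficulty for $ub_2$ is producing a resolvable partition into groups of $\sigma$ with no repeated pair and precisely the stated number of rounds. Once the designs are in hand, checking that the $t$-table realization and the parity corrections reproduce $ub_1$ and $ub_2$ with equality --- the ``matching explicit solutions'' --- is a direct, if somewhat tedious, count.
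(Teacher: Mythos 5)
Your treatment of $ub_1$ is essentially the paper's argument: reduce to $\sigma=2$ at the cost of a factor $\left\lceil 2/\sigma\right\rceil$ (splitting the pairs when $\sigma=1$), obtain a schedule in $\max\left(\left\lceil c/\gamma\right\rceil,\left\lceil s/2\right\rceil\right)$ rounds, and realize each round $t$ tables at a time for the factor $\left\lceil \frac 1 t\min\left(\left\lceil c/\gamma\right\rceil,s\right)\right\rceil$. The one divergence is the regime $s\leq c/\gamma$: the paper does not pad Howell designs there but invokes its $\sigma=1$ result (Proposition~\ref{prop:sigma1}, via de Werra's bounded edge-coloring theorem) together with the monotonicity relation in $\sigma$; your padding route also works, and your ``slightly more padding'' does repair three of the four exceptional pairs of Theorem~\ref{thm:howell} (pad $(3,4)$ to $H(3,6)$, and $(5,6)$, $(5,8)$ to $H(5,10)$), but note that $(m,2n)=(2,4)$ cannot be repaired this way, since $m=2$ forces $2n=4$; the paper itself shows three dinners are then necessary (Table~\ref{tab:s4c2}), so this corner case is a genuine exception that neither your argument nor, strictly speaking, the stated $ub_1$ accounts for.

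The genuine gap is in $ub_2$. The object you posit --- every round partitions all suppliers into groups of $\sigma$, no two suppliers grouped twice, rotated against the customer groups --- is a social-golfer-type resolvable structure whose existence with the required number of rounds is an open design-theoretic question (exactly the connection the paper flags in Remark~\ref{rk:ref}), and in fact your claimed round count is inconsistent with your own construction: if suppliers sat in $\sigma$-groups in every round, each supplier would meet all $\left\lceil c/\gamma\right\rceil$ customer groups in roughly $\left\lceil c/\gamma\right\rceil$ rounds, yet the pair budget caps any supplier at $\left\lfloor (s-1)/(\sigma-1)\right\rfloor$ rounds inside $\sigma$-groups; this is violated, e.g., already for $s=\sigma$ with $\left\lceil c/\gamma\right\rceil\geq 2$, which satisfies the theorem's hypothesis $\left\lceil s/\sigma\right\rceil\leq\left\lceil c/\gamma\right\rceil$. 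The paper's construction sidesteps all design existence: with $t'=\left\lceil s/\sigma\right\rceil$ tables, $s'=t'\sigma$ padded suppliers and one customer group per table, suppliers share a table \emph{only on the first evening} (each of $t'$ groups of $\sigma$ fresh suppliers sits with one customer group); on all later evenings every supplier eats alone, scheduled by Proposition~\ref{prop:sigma1}, so the pair constraint is trivially satisfied, giving $1+(s'-\sigma)+\sigma\max\left(\left\lceil c/\gamma\right\rceil-t',t'\right)=1-\sigma+\sigma\max\left(\left\lceil c/\gamma\right\rceil,2t'\right)$ dinners, after which the splitting relations in $t$ and $\gamma$ and monotonicity in $s$ yield $ub_2$. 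The factor $\sigma$ in $ub_2$ is precisely the price of suppliers eating alone after the first dinner; a schedule of the kind you describe would, if it existed, beat $ub_2$ by a factor of about $\sigma$, which signals that the assertion that the count ``works out to'' $1-\sigma+\sigma\max\left(\left\lceil c/\gamma\right\rceil,2\left\lceil s/\sigma\right\rceil\right)$ was never actually derived.
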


None of the $ub_1$ and $ub_2$ is strictly better than the other. 
If one takes  $t=3$, $s=6$, $c=3$, $\sigma=2$, and $\gamma=1$, then $ub_1$ takes the value $3$ and $ub_2$ takes the value $11$. 
If one takes  $t=3$, $s=6$, $c=9$, $\sigma=2$, and $\gamma=1$ (only the number of customers changes), then $ub_1$ takes the value $18$ and $ub_2$ takes the value $17$.

The upper bound $ub_2$ is also applicable when $\lceil s/\sigma\rceil>\lceil c/\gamma\rceil$ by making groups of at most $\sigma\lceil c/\gamma\rceil$ suppliers each; see Section~\ref{sec:upper} for details.

\subsection{Plan}

Section~\ref{sec:lower} is devoted to the proof of Theorem~\ref{thm:lower}. Optimal solutions for special cases---whose optimality is proven with the help of Theorem~\ref{thm:lower}---are described in Section~\ref{sec:spec}. These optimal solutions can be used to build feasible solutions for other values of the parameters via slight changes (Section~\ref{sec:upper}). These feasible solutions provide upper bounds for the Business Dinner Problem and prove Theorem~\ref{thm:upper}. In the last section (Section~\ref{sec:open}), some open questions are stated.

\subsection*{Acknowledgments} The authors thank Hampus Adolfsson and Peipei Han for spotting the following mistakes, which are present in the published version of this paper ({\em Journal of Combinatorial Mathematics and Combinatorial Computing}, 2016): the case $(\lceil c/\gamma\rceil,s) = (2,4)$ was missing in Theorem~\ref{thm:upper} (noted by Hampus Adolfsson); the case $s=2t-1$ was missing in Proposition~\ref{prop:cas-par} (noted by Peipei Han).

\section{Lower bounds}\label{sec:lower}

The purpose of this section is to prove Theorem~\ref{thm:lower}. It will be done by proving three propositions (Propositions~\ref{prop:straight},~\ref{prop:counting}, and~\ref{prop:boundlin}), each of them providing some of the lower bounds.

\subsection{Straightforward lower bounds}

Take a customer. During a dinner, he sits at a table with at most $\sigma$ suppliers. To meet all suppliers, he needs at least $\lceil s/\sigma\rceil$ dinners.

Take a supplier. During a dinner, he sits at a table with at most $\gamma$ customers. To meet all customers, he needs at least $\lceil c/\gamma\rceil$ dinners.

This short discussion implies

\begin{proposition}\label{prop:straight}
$\lceil s/\sigma\rceil$ and $\lceil c/\gamma\rceil$ are lower bounds for the Business Dinner Problem.
\end{proposition}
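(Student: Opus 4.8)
The plan is to use a simple counting (pigeonhole) argument applied to a single fixed participant, treating the two bounds separately but symmetrically. For the bound $\lceil s/\sigma\rceil$, I would fix an arbitrary customer, say customer $1$. In any given dinner this customer occupies exactly one table, and that table hosts at most $\sigma$ suppliers, so during a single dinner customer $1$ meets at most $\sigma$ suppliers. If the schedule uses $D$ dinners, then customer $1$ meets at most $D\sigma$ suppliers in total over all dinners. Since the constraints require customer $1$ to meet every one of the $s$ suppliers, we must have $D\sigma\geq s$, that is $D\geq s/\sigma$.

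For the bound $\lceil c/\gamma\rceil$, I would run the symmetric argument with the roles of suppliers and customers (and of $\sigma$ and $\gamma$) interchanged: fix an arbitrary supplier, observe that in each dinner he sits at one table with at most $\gamma$ customers, so after $D$ dinners he has met at most $D\gamma$ customers; requiring him to meet all $c$ customers forces $D\geq c/\gamma$.

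The only step requiring any care is the rounding. The raw inequalities give $D\geq s/\sigma$ and $D\geq c/\gamma$, and since the number of dinners $D$ is a nonnegative integer, I may replace each right-hand side by its ceiling to obtain $D\geq\lceil s/\sigma\rceil$ and $D\geq\lceil c/\gamma\rceil$. It is worth remarking that the argument uses the meeting requirement only in its weak ``each customer meets each supplier'' form; the stronger ``exactly once'' condition and the ``two suppliers meet at most once'' condition play no role here, which is precisely why these bounds are the most elementary ones.

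There is essentially no obstacle to overcome: the statement follows directly once one isolates a single customer (resp.\ supplier) and counts the maximum number of new partners available per dinner. The substance of the paper's lower-bound effort therefore lies in the subsequent propositions, where the unused ``at most once'' constraints are exploited to yield the sharper bounds $lb_3$, $lb_4$, and $lb_5$.
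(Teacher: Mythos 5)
Your proof is correct and follows exactly the same pigeonhole argument as the paper: fix a single customer (resp.\ supplier), note that each dinner contributes at most $\sigma$ suppliers (resp.\ $\gamma$ customers) to his meetings, and round up using the integrality of the number of dinners. The paper's version is merely a terser statement of the identical idea.
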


It settles the case of $lb_1$ and $lb_2$.

\subsection{A counting argument}

\begin{proposition}\label{prop:counting}
If $c>\gamma$, the quantity
$\left\lceil\frac{\sqrt{s}}{t\gamma}\left((c-\gamma)\max\left(\sqrt{\frac{\gamma}{c-\gamma}},1\right)+\frac{\gamma}{\max\left(\sqrt{\frac{\gamma}{c-\gamma}},1\right)}\right)\right\rceil$ is a lower bound for the Business Dinner Problem.
\end{proposition}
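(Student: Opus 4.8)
The plan is to recast the schedule as a set system and to run a double-counting argument over pairs of table-slots, with the square root of $s$ emerging from the convexity of that count. Write $N=rt$ for the total number of (dinner, table) slots. For a supplier $p$, let $T_p$ be the set of slots he occupies---one per dinner he attends---and put $k_p=|T_p|$. Two facts drive everything. First, since two suppliers may share a table at most once, $|T_p\cap T_q|\le 1$ for $p\neq q$; equivalently, every unordered pair of slots is common to at most one supplier. Second, at each of his slots a supplier meets at most $\gamma$ of the $c$ customers and he meets each exactly once, so the customer sets attached to his slots are disjoint and of size at most $\gamma$; hence $k_p\ge\lceil c/\gamma\rceil$, and more precisely one slot absorbs at most $\gamma$ customers while the remaining at least $c-\gamma$ customers require at least $(c-\gamma)/\gamma$ further slots.

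The engine is the double count $\sum_{p}\binom{k_p}{2}\le\binom{N}{2}$, valid because each pair of slots is charged to at most one supplier (one may even discard the $r\binom{t}{2}$ pairs lying inside a single dinner). The left-hand side is convex in the $k_p$, so Cauchy--Schwarz across the $s$ suppliers, together with the coverage lower bound on each $k_p$, turns this into a quadratic inequality in $N$. To recover the exact shape of the bound I would not feed in the crude estimate $k_p\ge c/\gamma$ uniformly; instead I would introduce a weight $\alpha\ge 1$ that separates, for each supplier, the single \emph{full} slot carrying up to $\gamma$ customers from the $\ge (c-\gamma)/\gamma$ slots carrying the rest, and apply a weighted Cauchy--Schwarz/AM--GM. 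This is exactly what produces the two competing terms $(c-\gamma)\alpha$ and $\gamma/\alpha$: the first measures the cost of covering the customers beyond the first slotful, the second the cost of that first slotful.

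Carrying this through yields an inequality of the form $s\bigl((c-\gamma)\alpha+\gamma/\alpha\bigr)^2\le(\gamma N)^2$ for the chosen $\alpha$. Solving for $N=rt$, dividing by $t$, and taking the ceiling gives the stated expression, while optimizing the free weight over $[1,\infty)$ fixes $\alpha=\max\bigl(\sqrt{\gamma/(c-\gamma)},1\bigr)$: the unconstrained optimum $\sqrt{\gamma/(c-\gamma)}$ of $(c-\gamma)\alpha+\gamma/\alpha$ is used when it exceeds $1$, and otherwise the clamp $\alpha=1$ applies because a supplier cannot use fewer than one full slot. The hypothesis $c>\gamma$ is precisely what makes $c-\gamma$ positive and the whole construction meaningful.

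The main obstacle is the step where the supplier-pairing budget $\binom{N}{2}$ must be leveraged together with the customer-coverage constraint to get the bracket exactly $(c-\gamma)\alpha+\gamma/\alpha$ rather than the weaker $\sqrt{c(c-\gamma)}$ that a naive application of Cauchy--Schwarz (using only the average $\bar k\ge c/\gamma$) would give. Getting the stronger constant seems to require genuinely using the customer side---that the slots of a fixed supplier carry disjoint customer sets and that every supplier--customer pair meets exactly once---and not only the packing bound; this coupling, together with the bookkeeping of the integer ceilings and the $r\binom{t}{2}$ correction, is where I expect the real work to lie. The optimization over $\alpha$ and the passage to the closed form are then routine.
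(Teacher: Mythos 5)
Your argument funnels everything through the slot-pair count $\sum_{p}\binom{k_p}{2}\le\binom{N}{2}$ together with $k_p\ge\lceil c/\gamma\rceil$, and then asserts a ``weighted Cauchy--Schwarz'' producing $s\bigl((c-\gamma)\alpha+\gamma/\alpha\bigr)^2\le(\gamma N)^2$. That pivotal inequality is never derived, and in fact it cannot follow from your engine: the splitting of each supplier's slots into one full slot plus at least $(c-\gamma)/\gamma$ others contains no information beyond $k_p\ge c/\gamma$, and the pair count loses the quadratic ``$-1$''. Concretely, take $\gamma=1$, $c=2$, $t=1$, so the bracket equals $c$ and Proposition~\ref{prop:counting} claims $r\ge 2\sqrt{s}$; your count gives only $N(N-1)\ge s\cdot 2\cdot 1$, i.e.\ $r\approx\sqrt{2s}$, short by a factor $\sqrt{2}$. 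The packing constraint $|T_p\cap T_q|\le 1$ is logically consistent with schedules at the $\sqrt{2s}$ level, so no weighting, convexity refinement, or the $r\binom{t}{2}$ correction can close this gap; your $\alpha$-optimization reverse-engineers the target expression rather than deriving it. You correctly sense that ``genuinely using the customer side'' is required, but that is exactly the missing idea, not a routine step. (That the constant $2\sqrt{s}$ is right, hence the gap real, is confirmed by the tight construction of Section~\ref{subsec:t1}: $t=\gamma=1$, $c=2$, $s=p^2$ has optimum $2p$.)

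The paper's proof uses a different mechanism, not a pair count. Let $z$ be the maximum number of suppliers ever simultaneously at one table over the whole schedule, and $x\le\gamma$ the number of customers at that table. Then: (i) every customer attends at least $s/z$ dinners, since he meets at most $z$ suppliers per dinner; (ii) each of the at least $c-x$ customers \emph{not} at that table attends at least $z$ dinners, because those $z$ suppliers have pairwise exhausted their single shared table, so such a customer can meet at most one of them per dinner. Writing $y_k$ for customer $k$'s attendance, this gives $\sum_k y_k\ge\max\bigl(cs/z,\,(c-\gamma)z+\gamma s/z\bigr)$ (the minimum over $1\le x\le\gamma$ sits at $x=\gamma$ when $z\ge\sqrt{s}$), and minimizing over $z$ places the optimum at $z=\sqrt{s}$ or $z=\sqrt{s\gamma/(c-\gamma)}$ --- this comparison, not a clamp on a weight $\alpha\ge 1$, is the true origin of the $\max\bigl(\sqrt{\gamma/(c-\gamma)},1\bigr)$ in the statement. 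Dividing by the per-dinner attendance capacity $t\gamma$ finishes the proof. Your inequality $k_p\ge\lceil c/\gamma\rceil$ is correct (the customer sets at a supplier's slots are disjoint), but in the paper it only powers the weaker bound $lb_3$; the $\sqrt{s}$-scaled bracket of $lb_4$ needs the $z$-argument above.
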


\begin{proof}
Denote $y_k$ the number of dinners for which customer $k=1,\ldots,c$ is present. Let $z$ be the maximum number of suppliers sitting simultaneously at a table among all  dinners and let $x$ be the number of customers present at that table.

On the one hand, each customer is present for at least $s/z$ dinners. Thus $\sum_{k=1}^cy_k\geq cs/z$.
On the other hand, at least $c-x$ customers are present for $z$ dinners, in order to be able to meet each of the $z$ suppliers having sat once together. Thus $\sum_{k=1}^cy_k\geq (c-x)z+xs/z$. Therefore, we have a lower bound for $\sum_{k=1}^cy_k$ being $\min_{z\in\mathbb{R}_+}\ell(z)$ with $\ell(z):=\max(cs/z,\min_{1\leq x\leq\gamma}(c-x)z+xs/z)$. 

If $z\leq\sqrt{s}$, then $cs/z\geq\min_{1\leq x\leq\gamma}(c-x)z+xs/z$ and $\ell(z)=cs/z$. If $z\geq\sqrt{s}$, then $cs/z\leq\min_{1\leq x\leq\gamma}(c-x)z+xs/z$. In this latter case,
$(c-x)z+sx/z$ is minimum for $x=\gamma$ and $\ell(z)=(c-\gamma)z+\gamma s/z$. The map $g:u\mapsto (c-\gamma)u+\gamma s/u$ attains its minimum for $u=\sqrt{\frac{s\gamma}{c-\gamma}}$. Therefore, according to the respective positions of $\sqrt{c-\gamma}$ and $\sqrt{\gamma}$, the minimum of $\ell(z)$ is attained for $z=\sqrt{s}$ or $z=\sqrt{\frac{s\gamma}{c-\gamma}}$. Thus, $$\sum_{k=1}^cy_k\geq(c-\gamma)\max\left(\sqrt{\frac{s\gamma}{c-\gamma}},\sqrt{s}\right)+\frac{s\gamma}{\max\left(\sqrt{\frac{s\gamma}{c-\gamma}},\sqrt{s}\right)} \, .$$

Now, the conclusion follows from the fact that there are at most $t\gamma$ customers present at each dinner.
\end{proof}

It settles the case of $lb_4$.

\subsubsection*{Remark} The lower bound $lb_4$ can be improved by taking into account the inequality $z\leq\sigma$ in the proof above. For sake of simplicity, we have not computed $lb_4$ with this additional constraint.

\subsection{Lower bounds through linear programming and duality}

Another way to get lower bounds consists in introducing variables $x_{i,j}\in\mathbb{Z}_+$ which count for a given schedule the number of times supplier $i$ sits at a table of exactly $j$ suppliers (him included). 
We have the following relation $\sum_{j=1}^{\sigma}\gamma x_{i,j}\geq c$, which comes from the fact that each customer sits with supplier $i$ once.
Another relation is $\sum_{j=1}^{\sigma}(j-1)x_{i,j}\leq s-1$, since two suppliers may sit at a common table at most once.
$\sum_{i\in S}\sum_{j=1}^{\sigma}\frac{1}{j}x_{i,j}$ is the number of tables needed, counted with multiplicity (each table is counted as many times it is used over the whole schedule). A lower bound is therefore given by the following linear program: 

\begin{equation}\label{eq:lb1}
\begin{array}{rlclr}\min & \displaystyle{\frac{1}{t}\sum_{i\in S}\sum_{j=1}^{\sigma}\frac{1}{j}x_{i,j}} & & & \\
\\ \mbox{s.t.} & \displaystyle{\sum_{j=1}^{\sigma}x_{i,j}} & \geq & \left\lceil\frac{c}{\gamma}\right\rceil  & i\in S
\\ & \displaystyle{\sum_{j=1}^{\sigma}(j-1)x_{i,j}} & \leq & s-1 & i\in S
\\ & x_{i,j} \in \mathbb{R}_+ & & & i\in S,\,j\in\{1,\ldots,\sigma\} \,.
\end{array}
\end{equation}

\begin{proposition}\label{prop:lin}
The linear program~\eqref{eq:lb1} has an optimal value equal to 
$$\frac{s}{t}\max\left(\frac{1}{\sigma}\left\lceil\frac{c}{\gamma}\right\rceil,\max_{j\in\{2,\ldots,\sigma\}}\left(\frac 2 j \left\lceil\frac{c}{\gamma}\right\rceil-\frac{s-1}{j(j-1)}\right)\right)\, .$$
\end{proposition}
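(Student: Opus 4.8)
The plan is to exploit the fact that the linear program~\eqref{eq:lb1} \emph{separates} over the suppliers: the objective is a sum over $i\in S$ of the terms $\frac1t\sum_{j}\frac1j x_{i,j}$, and the constraints couple only variables sharing the same index $i$. Since the right-hand sides $\lceil c/\gamma\rceil$ and $s-1$ do not depend on $i$, all $s$ sub-problems are identical. Writing $K=\lceil c/\gamma\rceil$ and $M=s-1$, it therefore suffices to determine the optimal value $V$ of the single-supplier program $\min\sum_{j=1}^\sigma \frac1j x_j$ subject to $\sum_j x_j\ge K$ and $\sum_j (j-1)x_j\le M$, $x\ge 0$; the value of~\eqref{eq:lb1} is then $\frac{s}{t}V$, and I will match $V$ with the bracketed maximum.

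First I would compute $V$ through LP duality, which is legitimate since the single-supplier program is feasible (take $x_1=K$) and bounded below by $0$, so strong duality holds. Assigning a multiplier $\alpha\ge 0$ to the first constraint and $\beta\ge 0$ to the second, the dual reads
\[
\max\ K\alpha-M\beta\quad\text{s.t.}\quad \alpha-(j-1)\beta\le \tfrac1j\ \ (j=1,\dots,\sigma),\ \ \alpha,\beta\ge0.
\]
The key observation is that, for a fixed $\beta\ge 0$, the largest admissible $\alpha$ is the \emph{lower envelope} $\alpha^*(\beta)=\min_{1\le j\le\sigma}\phi_j(\beta)$ of the affine maps $\phi_j(\beta)=\frac1j+(j-1)\beta$. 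Hence the dual value is the maximum over $\beta\ge0$ of $h(\beta)=K\alpha^*(\beta)-M\beta$, which is concave and piecewise linear (a minimum of affine maps followed by an affine transformation). Note moreover that $\alpha^*(\beta)\ge 1/\sigma>0$ always, so the sign constraint on $\alpha$ is never binding and can be dropped.

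The main computational step is to locate the breakpoints of $h$ and evaluate it there. The lines $\phi_j$ have increasing slopes $j-1$ and decreasing intercepts $1/j$, so along $[0,\infty)$ the envelope passes successively through $\phi_\sigma,\phi_{\sigma-1},\dots,\phi_1$, with consecutive lines $\phi_j,\phi_{j+1}$ crossing at $\beta=\frac{1}{j(j+1)}$, where $\alpha^*=\frac{2}{j+1}$. I would verify that no line is skipped, the crossing abscissae $\frac{1}{j(j+1)}$ being strictly decreasing in $j$, so that these are exactly the breakpoints of $h$. A concave piecewise-linear function on $[0,\infty)$ that tends to $-\infty$ attains its maximum at $\beta=0$ or at a breakpoint; evaluating $h$ gives $h(0)=K/\sigma$ and $h\!\left(\frac{1}{j(j+1)}\right)=\frac{2K}{j+1}-\frac{M}{j(j+1)}$ for $j=1,\dots,\sigma-1$. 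After reindexing, the latter becomes $\frac{2K}{j}-\frac{M}{j(j-1)}$ for $j\in\{2,\dots,\sigma\}$, whence $V$ equals the claimed maximum and the optimal value of~\eqref{eq:lb1} is $\frac{s}{t}V$.

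I expect the main obstacle to be the envelope bookkeeping of the third paragraph: rigorously arguing that the active constraint switches from $\phi_{j+1}$ to $\phi_j$ at $\beta=\frac{1}{j(j+1)}$ with no line omitted, and that the maximum of the concave $h$ is attained at one of these finitely many corner points rather than in the interior of a linear piece. The degenerate case $\sigma=1$ (empty inner maximum) should be checked separately, but there the single constraint forces $\alpha\le1$, $\beta=0$, and $V=K$, consistent with the formula. Everything else reduces to the routine two-line crossings and the elementary evaluation of $h$ at the corners.
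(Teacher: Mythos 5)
Your proof is correct and takes essentially the same route as the paper: separate the LP over the $s$ identical supplier subproblems, dualize, and observe that the binding dual constraint is the lower envelope of the affine maps $\beta\mapsto\frac1j+(j-1)\beta$, whose breakpoints $\frac{1}{j(j+1)}$ yield exactly the candidate values $K/\sigma$ and $\frac{2K}{j}-\frac{M}{j(j-1)}$ for $j\in\{2,\ldots,\sigma\}$. The paper phrases the same envelope analysis as a partition of the $\mu$-axis into the intervals $\left[\frac{1}{(k+1)k},\frac{1}{k(k-1)}\right]$ with a per-interval linear optimization (plus a min--max interchange on $[1/2,+\infty)$), whereas your concavity argument for $h$ reaches the identical endpoint evaluations slightly more directly, with the explicit $\sigma=1$ check a welcome extra.
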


\begin{proof}
Program~\eqref{eq:lb1} is separable in $i$. Its study reduces therefore to
\begin{equation}\label{eq:lb2}
\begin{array}{rlclr}\min & \displaystyle{\sum_{j=1}^{\sigma}\frac{1}{j}x_{j}} & & & \\
\\ \mbox{s.t.} & \displaystyle{\sum_{j=1}^{\sigma}x_{j}} & \geq & \left\lceil\frac{c}{\gamma}\right\rceil  & 
\\ & \displaystyle{\sum_{j=1}^{\sigma}(j-1)x_{j}} & \leq & s-1 & 
\\ & x_{j} \in \mathbb{R}_+ & & & j\in\{1,\ldots,\sigma\} \, .
\end{array}
\end{equation}

The dual of program~\eqref{eq:lb2} is
\begin{equation}\label{eq:duallb2}
\begin{array}{rlclr}\max & \left\lceil\frac{c}{\gamma}\right\rceil\lambda-(s-1)\mu & & & 
\\ \mbox{s.t.} & \lambda\leq (j-1)\mu+\frac{1}{j} & & \quad j\in\{1,\ldots,\sigma\} \\
 & \lambda,\mu\in\mathbb{R}_+\, . & 
\end{array}
\end{equation}

Let $(\lambda^*,\mu^*)$ be an optimal solution of~\eqref{eq:duallb2}. We have $$\lambda^*=\min_{j\in\{1,\ldots,\sigma\}}(j-1)\mu^*+\frac{1}{j}$$ and thus an optimal value equal to 
\begin{equation}\label{eq:maxmin}\max_{\mu\geq 0}\min_{j\in\{1,\ldots,\sigma\}}\left((j-1)\left\lceil\frac{c}{\gamma}\right\rceil-(s-1)\right)\mu+\frac{1}{j}\left\lceil\frac{c}{\gamma}\right\rceil \,.
\end{equation}

To evaluate this latter expression, we write $$[0,+\infty)=\left[0,\frac{1}{\sigma(\sigma-1)}\right]\cup\left(\bigcup_{k=2}^{\sigma-1}\left[\frac{1}{(k+1)k},\frac{1}{k(k-1)}\right]\right)\cup[1/2,+\infty),$$ and we optimize for $\mu$ belonging to each of these intervals. 

Remark that the map $g:j\mapsto\mu j+\frac{1}{j}$ is decreasing for $j\leq\frac{1}{\sqrt{\mu}}$ and increasing otherwise.

\begin{description}
\item[Case $\mu\in{[0,\frac{1}{\sigma(\sigma-1)}]}$] According to the remark concerning the map $g$, we have $$\min_{j\in\{1,\ldots,\sigma\}}\left((j-1)\left\lceil\frac{c}{\gamma}\right\rceil-(s-1)\right)\mu+\frac{1}{j}\left\lceil\frac{c}{\gamma}\right\rceil=\min_{j\in\{\sigma-1,\sigma\}}\left((j-1)\left\lceil\frac{c}{\gamma}\right\rceil-(s-1)\right)\mu+\frac{1}{j}\left\lceil\frac{c}{\gamma}\right\rceil.$$

Comparing the value obtained for $j=\sigma-1$ and $j=\sigma$ leads to the maximum on this interval, depending on the sign of $(\sigma-1)\left\lceil\frac{c}{\gamma}\right\rceil-(s-1)$,
$$\max\left(\frac{1}{\sigma}\left\lceil\frac{c}{\gamma}\right\rceil,\frac 2 {\sigma}\left\lceil\frac{c}{\gamma}\right\rceil-\frac{s-1}{(\sigma-1)\sigma}\right).$$\\

\item[Case $\mu\in{[\frac{1}{(k+1)k},\frac{1}{k(k-1)}]}$] 
For such a $\mu$, according to the remark on $g$, the $j$ realizing the minimum is in $\{k-1,k,k+1\}$. A straightforward computation shows that the minimum is actually reached on $k$.

\begin{itemize}
\item If $\lceil c/\gamma\rceil(k-1)\leq s-1$, then the maximum is reached for $\mu=\frac{1}{(k+1)k}$ and we get a maximum equal to $$\frac 2 {k+1}\left\lceil\frac{c}{\gamma}\right\rceil-\frac{s-1}{(k+1)k}.$$
\item If $\lceil c/\gamma\rceil(k-1)\geq s-1$, then the maximum is reached for $\mu=\frac{1}{k(k-1)}$ and we get a maximum equal to $$\frac 2 k\left\lceil\frac{c}{\gamma}\right\rceil-\frac{s-1}{k(k-1)}.$$\\
\end{itemize}

\item[Case $\mu\in{[1/2,+\infty)}$] Exchanging $\min$ and $\max$ in Equation~\eqref{eq:maxmin} increases the expression. Letting $j=1$ once we have exchanged leads therefore to an upper bound, which is equal to $\lceil c/\gamma\rceil-(s-1)/2$. This quantity is also a lower bound because it is what we get when we evaluate Equation~\eqref{eq:maxmin} for $\mu=1/2$.
\end{description}

Therefore, by strong duality, the optimal value of~\eqref{eq:duallb2} is 
\begin{equation}\label{eq:dualfin}
\max\left(\frac{1}{\sigma}\left\lceil\frac{c}{\gamma}\right\rceil,\max_{j\in\{2,\ldots,\sigma\}}\left(\frac 2 j\left\lceil\frac{c}{\gamma}\right\rceil-\frac{s-1}{j(j-1)}\right)\right).\end{equation}
\end{proof}

To get the optimal $j$ as stated just after Theorem~\ref{thm:lower}, we compute the values $x\geq 1$ for which $x\mapsto\frac 2 x \left\lceil\frac{c}{\gamma}\right\rceil-\frac{s-1}{x(x-1)}$ is an increasing map. A simple calculation leads to
$$x\in\left[1;\frac{1}{1-\sqrt{\frac{s-1}{s-1+2\lceil c/\gamma\rceil}}}\right]\, .$$ The integer $j^*$ maximizing the expression~\eqref{eq:dualfin} is therefore one of the two integers around the right bound of this interval.

Proposition~\ref{prop:lin} and the preceding discussion lead to the following proposition.

\begin{proposition}\label{prop:boundlin}
$$\left\lceil\frac{s}{t}\max\left(\frac{1}{\sigma}\left\lceil\frac{c}{\gamma}\right\rceil,\max_{j\in\{2,\ldots,\sigma\}}\left(\frac 2 j \left\lceil\frac{c}{\gamma}\right\rceil-\frac{s-1}{j(j-1)}\right)\right)\right\rceil$$ is a lower bound for the Business Dinner Problem.
\end{proposition}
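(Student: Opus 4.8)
The plan is to combine the integer-counting setup introduced just before Proposition~\ref{prop:lin} with the exact LP value computed there, and then round. First I would fix an arbitrary feasible schedule using $D$ dinners and form the counts $x_{i,j}\in\mathbb{Z}_+$, the number of dinners at which supplier $i$ sits at a table hosting exactly $j$ suppliers (himself included). By construction these satisfy the two structural inequalities already noted: from the requirement that supplier $i$ meet all $c$ customers while each of his tables hosts at most $\gamma$ of them, one gets $\sum_{j=1}^{\sigma}\gamma x_{i,j}\geq c$; and from the rule that two suppliers share a table at most once, one gets $\sum_{j=1}^{\sigma}(j-1)x_{i,j}\leq s-1$.

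The one point requiring care is the passage from $\sum_{j}\gamma x_{i,j}\geq c$ to the first constraint of \eqref{eq:lb1}. Since the $x_{i,j}$ are integers, $\sum_{j}x_{i,j}$ is itself an integer no smaller than $c/\gamma$, hence no smaller than $\lceil c/\gamma\rceil$; this is exactly why the relaxation \eqref{eq:lb1} may carry $\lceil c/\gamma\rceil$ on its right-hand side without weakening the bound. Thus the integer vector $(x_{i,j})$ is feasible for \eqref{eq:lb1}.

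Next I would recall why the objective of \eqref{eq:lb1} bounds $D$ from below. A table hosting $j$ suppliers at a given dinner is counted $j$ times in $\sum_i x_{i,j}$, once per supplier seated there, so $\sum_{i\in S}\sum_{j=1}^{\sigma}\frac1j x_{i,j}$ equals the total number of (dinner, table) slots actually used over the schedule. As each dinner offers at most $t$ tables, this total is at most $tD$, giving $D\geq\frac1t\sum_{i\in S}\sum_{j=1}^{\sigma}\frac1j x_{i,j}$. Since $(x_{i,j})$ is feasible for \eqref{eq:lb1}, the right-hand side is at least the optimal value of that program, which Proposition~\ref{prop:lin} evaluates in closed form as $\frac{s}{t}\max\left(\frac{1}{\sigma}\lceil c/\gamma\rceil,\max_{j\in\{2,\ldots,\sigma\}}\left(\frac2j\lceil c/\gamma\rceil-\frac{s-1}{j(j-1)}\right)\right)$.

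Finally, because $D$ is a nonnegative integer bounded below by that real value, it is bounded below by its ceiling, which is precisely the displayed expression; this concludes the proof. I expect no serious obstacle: the combinatorial content lives entirely in the correctness of the two structural inequalities and in the interpretation of $\sum_{i}\sum_{j}\frac1j x_{i,j}$ as a count of used table-slots, while the analytic optimization has already been discharged in Proposition~\ref{prop:lin}. The only things easy to overlook are the integrality step yielding $\lceil c/\gamma\rceil$ and the final rounding of $D$ up to the ceiling.
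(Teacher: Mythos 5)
Your proof is correct and follows essentially the same route as the paper, whose own argument for Proposition~\ref{prop:boundlin} is exactly the combination of the LP~\eqref{eq:lb1} (with the same two structural constraints on the integer counts $x_{i,j}$ and the table-multiplicity objective $\frac{1}{t}\sum_{i}\sum_{j}\frac{1}{j}x_{i,j}$) with the closed-form optimal value from Proposition~\ref{prop:lin}, followed by rounding. You merely spell out the steps the paper leaves implicit --- the integrality argument turning $\sum_j \gamma x_{i,j}\geq c$ into the right-hand side $\lceil c/\gamma\rceil$, the identification of $\sum_{i}\sum_{j}\frac{1}{j}x_{i,j}$ with the number of used (dinner, table) slots and hence the bound by $tD$, and the final ceiling since $D$ is an integer --- all of which are handled correctly.
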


It gives $lb_3$ and $lb_5$.

\section{Special cases with optimal solutions}\label{sec:spec}

\subsection{Case $c\leq\gamma$}\label{subsec:cleqgamma} In this case, the optimal solution is $\lceil s/\sigma\rceil$. Indeed, all customers sit together at a table, and each evening, they have a dinner with exactly $\sigma$ suppliers, except maybe the last evening, when they have a dinner with $s-\sigma \lfloor s/\sigma\rfloor$ suppliers. It gives $\lceil s/\sigma\rceil$ dinners. Since it is also a lower bound (Proposition~\ref{prop:straight}), it is an optimal solution.

\subsection{Case $\sigma=1$} We have the following proposition.
\begin{proposition}\label{prop:sigma1}
If $\sigma=1$, then the optimal number of dinners is $$\max\left(s,\left\lceil\frac c \gamma\right\rceil,\left\lceil\frac s t\left\lceil \frac c \gamma\right\rceil\right\rceil\right)\, .$$
\end{proposition}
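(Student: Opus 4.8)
The plan is to reformulate the $\sigma=1$ case as a bipartite edge-colouring problem and then read off both the lower and the upper bound from standard facts about such colourings. Since $\sigma=1$ no two suppliers ever share a table, so the only surviving constraints are that each supplier meets every customer exactly once while sitting with at most $\gamma$ customers at a time. First I would partition the $c$ customers into $g:=\lceil c/\gamma\rceil$ groups $G_1,\dots,G_g$, each of size at most $\gamma$, and agree to keep each group together. Form the complete bipartite graph $K_{s,g}$ whose two sides are the $s$ suppliers and the $g$ groups, the edge $\{i,\ell\}$ encoding the unique required meeting of supplier $i$ with the whole group $G_\ell$. The key observation is that structured feasible schedules correspond \emph{exactly} to proper edge-colourings of $K_{s,g}$ in which every colour class is a matching of size at most $t$: a colour is a dinner, and for each edge of that colour we seat the corresponding supplier together with his group at a separate table. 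Properness guarantees that within a dinner no supplier and no group is used twice (nobody must sit at two tables at once), the size bound $\le t$ guarantees that the $t$ tables suffice, and the fact that each edge receives exactly one colour guarantees that each supplier--customer pair meets exactly once.

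Under this dictionary the quantity to minimise is the least number of colours in such a bounded edge-colouring of $K_{s,g}$. I would dispose of the lower bound first: $K_{s,g}$ has maximum degree $\Delta=\max(s,g)$ and $sg$ edges, so any proper colouring needs at least $\max(s,g)$ colours and any colouring into classes of size $\le t$ needs at least $\lceil sg/t\rceil$ colours. Translated back, these are precisely $lb_1=s$, $lb_2=\lceil c/\gamma\rceil=g$ (Proposition~\ref{prop:straight}) and $lb_3=\lceil\tfrac{s}{t}\lceil c/\gamma\rceil\rceil=\lceil sg/t\rceil$ (Proposition~\ref{prop:boundlin}, or directly: each supplier must be present on at least $g$ dinners, while at most $t$ suppliers can be present per dinner). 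Hence the optimum is at least $N:=\max(s,g,\lceil sg/t\rceil)$, the claimed value. When $g=1$, i.e. $c\le\gamma$, the formula reads $N=s$ and the statement already follows from Section~\ref{subsec:cleqgamma}, so for the lower-bound citations I may assume $g\ge2$.

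For the matching upper bound I would exhibit a colouring with exactly $N$ colours. The naive attempt---decompose $K_{s,g}$ into $\max(s,g)$ matchings by König's theorem and then cut any oversized matching into blocks of at most $t$ edges---is wasteful when $t<\max(s,g)$, since it uses $\max(s,g)\,\lceil\min(s,g)/t\rceil$ dinners rather than $N$. Reconciling the two lower bounds is therefore the crux. The clean tool is the classical theorem of de Werra on equitable (balanced) edge-colourings of bipartite graphs: for every integer $k\ge\Delta(K_{s,g})=\max(s,g)$ there is a proper edge $k$-colouring in which every colour class has $\lfloor sg/k\rfloor$ or $\lceil sg/k\rceil$ edges. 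Taking $k=N$, which is legitimate since $N\ge\max(s,g)$, every class then has at most $\lceil sg/N\rceil\le t$ edges (because $N\ge\lceil sg/t\rceil\ge sg/t$ forces $sg/N\le t$, and $t$ is an integer). This colouring satisfies both constraints with exactly $N$ colours, hence via the dictionary yields a feasible schedule in $N$ dinners; combined with the lower bound this proves optimality.

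I expect the genuine obstacle to be exactly this balanced decomposition, namely producing colour classes that are simultaneously matchings and uniformly small; invoking de Werra's theorem removes it. If a self-contained argument is preferred, I would make the transparent ingredient explicit: assuming $s\ge g$, the cyclic rule $\{i,\ell\}\mapsto(i+\ell)\bmod s$ is a proper $s$-edge-colouring of $K_{s,g}$ into $s$ matchings of size exactly $g$, which already settles the abundant-table regime $t\ge\max(s,g)$ where $N=\max(s,g)$; for the remaining regime one splits the torus $\mathbb{Z}_s\times\mathbb{Z}_g$ of cells into $N$ partial permutation matrices of size at most $t$, which is the explicit form of the balanced colouring above.
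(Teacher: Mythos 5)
Your proposal is correct and takes essentially the same approach as the paper: both reduce the $\sigma=1$ case to a proper edge-colouring of a complete bipartite graph in which each colour class has at most $t$ edges, and both invoke de Werra's theorem to get exactly $\max\left(s,\left\lceil\frac{c}{\gamma}\right\rceil,\left\lceil\frac{s}{t}\left\lceil\frac{c}{\gamma}\right\rceil\right\rceil\right)$ colours, with optimality from $lb_1$, $lb_2$, $lb_3$. The only cosmetic difference is that you group the customers first and apply the equitable-colouring form of de Werra's result to $K_{s,\lceil c/\gamma\rceil}$, deriving the class-size bound $\lceil sg/N\rceil\leq t$ yourself, whereas the paper treats $\gamma=1$ on $K_{s,c}$ with de Werra's bound quoted directly and then groups afterwards.
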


\begin{proof}
Assume $\gamma=1$. Consider the complete bipartite graph $K_{s,c}$ with on one side the suppliers and on the other side the customers. If the dinners are the colors, we want to find a proper edge-coloring of $K_{s,c}$ with each color being present at most $t$ times. According to a theorem by De Werra \cite{DeW70}, the minimal number of colors in a proper edge-coloring of a bipartite graph $G=(V,E)$ with each color being present at most $t$ times is $\max(\lceil |E|/t\rceil,\Delta(G))$, where $\Delta(G)$ is the maximal degree of $G$. Therefore, the minimal number of colors in $K_{s,c}$ is $\max(\lceil sc/t\rceil,s,c)$.

The case $\gamma>1$ is obtained as follows: split the customers in $\lceil c/\gamma\rceil$ groups, each being of size $\leq\gamma$; according to the preceding construction, we get a feasible solution with $\max(s,\lceil c/\gamma\rceil,\lceil s/t\lceil c/\gamma\rceil\rceil)$ dinners. This solution is optimal according to the lower bounds $lb_1$, $lb_2$, and $lb_3$ of Theorem~\ref{thm:lower}.
\end{proof}

\subsection{Case $\sigma=2$, $s>c/\gamma$ and $t\geq\min(c/\gamma,s/2)$} \label{subsec:howell}

\begin{proposition}\label{prop:dinHow}
If  $\sigma=2$, $s>c/\gamma$, $t\geq\min(c/\gamma,s/2)$ and $(\lceil c/\gamma\rceil,s)\neq (2,4)$, the optimal number of dinners is $$\max\left(\left\lceil\frac c \gamma\right\rceil,\left\lceil\frac s 2 \right\rceil\right)\, .$$ 
\end{proposition}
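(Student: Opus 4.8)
The plan is to establish the value $D:=\max(\lceil c/\gamma\rceil,\lceil s/2\rceil)$ as both a lower and an upper bound. The lower bound is immediate: since $\sigma=2$, Proposition~\ref{prop:straight} already gives that $\lceil s/2\rceil$ and $\lceil c/\gamma\rceil$ are lower bounds, hence so is their maximum. All the effort therefore goes into building a feasible schedule using exactly $D$ dinners.

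My first step would be to reduce the whole problem to a combinatorial array. Put $m:=\lceil c/\gamma\rceil$, split the customers into $m$ groups of at most $\gamma$ customers, and treat each group as a single ``meta-customer'' kept together at one table throughout. A schedule then becomes an array whose rows are the dinners and whose columns are the $m$ groups, each cell listing the at most two suppliers seated with that group at that dinner. The four constraints of the problem translate into: every supplier occurs once per column (it meets each group once), at most once per row (it sits at one table per dinner), every unordered supplier-pair occurs in at most one cell (two suppliers meet at most once), and each row has at most $t$ non-empty cells. The first three conditions are exactly those of a Howell design, which is the object I would invoke (Theorem~\ref{thm:howell}). When $s$ is odd I would first adjoin a phantom supplier to obtain an even number of symbols $2n$, build the design, and delete the phantom at the end, which merely turns one pair per row into a one-supplier table.

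Next I would split into two cases according to which term realizes the maximum. In the wide case $\lceil s/2\rceil\le m$, so $D=m$: I take a Howell design $H(m,2n)$ on the (possibly padded) supplier set; it is an $m\times m$ array in which every symbol appears once per row and column, so each group meets every supplier over $m$ dinners, and each row decomposes into $n=\lceil s/2\rceil$ pairs, i.e. $\lceil s/2\rceil$ tables. Because $t\ge\min(c/\gamma,s/2)$ and $t$ is an integer, one checks that $t\ge\lceil s/2\rceil$ in this case, so the table bound is met; here the condition $s>c/\gamma$ is what keeps $m$ inside the existence range $n\le m\le 2n-1$. In the tall case $m<\lceil s/2\rceil$, so $D=\lceil s/2\rceil$: I take the minimal design $H(n,2n)$ with $n=\lceil s/2\rceil$, every cell of which is a pair, and simply retain $m$ of its $n$ columns (relabelled as the groups) and discard the others. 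This produces $\lceil s/2\rceil$ dinners with exactly $m$ tables per dinner, and here the hypothesis yields $t\ge\lceil c/\gamma\rceil=m$, as required.

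The hard part will be the existence of these Howell designs, i.e. the sporadic exclusions of Theorem~\ref{thm:howell}. The tall case only ever calls for a minimal design $H(n,2n)$, among which the single forbidden pair is $(2,4)$, corresponding precisely to $(\lceil c/\gamma\rceil,s)=(2,4)$, the excluded instance. The wide case, however, may call for $H(m,2n)$ with $m>n$ and can therefore run into the remaining forbidden pairs $(3,4)$, $(5,6)$, and $(5,8)$, as well as the degenerate boundary $m=s$; each of these must be settled by a separate, explicit small schedule, with attention to the number of tables it consumes. Verifying that the inequality $m\le 2n-1$ is genuinely forced by $s>c/\gamma$ (together with the parity padding), and carrying out this finite case analysis for the sporadic designs, is where I expect the real work to lie.
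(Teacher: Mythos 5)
Your proposal follows the paper's proof essentially step for step: the same reduction of customer groups to the columns of a Howell design, the same two cases (for $\lceil c/\gamma\rceil\geq s/2$, use $H(\lceil c/\gamma\rceil,2n)$ with a fictitious supplier when $s$ is odd; for $s/2>\lceil c/\gamma\rceil$, use $H(n,2n)$ truncated to $\lceil c/\gamma\rceil$ columns, matching the lower bound $\lceil s/\sigma\rceil$), and the same outsourcing of the sporadic pairs $(3,4),(5,6),(5,8)$ to explicit small schedules, which the paper supplies in Tables~\ref{tab:s4c3}--\ref{tab:s8c5}. One minor slip: the excluded instance $(\lceil c/\gamma\rceil,s)=(2,4)$ has $m=n=2$ and thus falls in your \emph{wide} case, not the tall one (in the tall case $H(2,4)$ would only be invoked for $m=1$, where a trivial direct schedule exists), but this misattribution does not change the structure or validity of the argument relative to the paper's own proof.
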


This case coincides more or less with the concept of Howell designs, presented in Section~\ref{subsec:literature}. With the help of Theorem~\ref{thm:howell}, we are able to prove Proposition~\ref{prop:dinHow}.

\begin{proof}[Proof of Proposition~\ref{prop:dinHow}]
Suppose first that $(\lceil c/\gamma\rceil,s)\notin\{(3,4),(5,6),(5,8)\}$. Because of the assumption of the theorem, we have also $(\lceil c/\gamma\rceil,s)\neq(2,4)$. Therefore, we can apply Theorem~\ref{thm:howell}.
\begin{description}
\item[Case $\lceil c/\gamma\rceil\geq s/2$]
If $s$ is even, let $2n=s$. With the interpretation of the Howell design $H(\lceil c/\gamma\rceil,2n)$ in terms of schedule, we are able to find a solution in $\lceil c/\gamma\rceil$ dinners, which is obviously optimal as it is also the lower bound $lb_2$.

If $s$ is odd, let $2n-1=s$, add a fictitious supplier and we get again an optimal solution in $\lceil c/\gamma\rceil$ evenings with the same construction.
\item[Case $s/2>\lceil c/\gamma\rceil$]
As in the case above, we define $n$ such that $s=2n$ or $s=2n-1$ according to the parity of $s$. With $H(n,2n)$ from which we keep only the $\lceil c/\gamma\rceil$ first columns, we get a feasible schedule in $n$ dinners. As $n=\lceil s/\sigma\rceil$, which is the lower bound $lb_1$, we get the optimality of this solution.
\end{description}

\begin{table}[ht!]
\centering
\begin{tabular}{|c|c|c|}\hline 1,2 & 3,4 & \\ \hline 3 & 1 & 2,4 \\ \hline 4 & 2 & 1,3 \\ \hline \end{tabular}
\caption{An optimal schedule for  $t=3$ tables, $s=4$ suppliers, $\lceil c/\gamma\rceil=3$ groups of customers (the columns), at most $\sigma=2$ suppliers per table, in $3$ dinners (the rows).}
\label{tab:s4c3}
\end{table}

\begin{table}[ht!]
\centering
\begin{tabular}{|c|c|c|c|c|}
\hline  & 6 & 2,3 & 4,5 & 1\\ 
\hline 6 & 3,4 & 1 & 2 & 5 \\ 
\hline 3,5 & 1,2 &  & 6 & 4\\ 
\hline 2,4 &  & 5 & 1 & 3,6 \\ 
\hline 1 & 5 & 4,6 & 3 & 2 \\ \hline \end{tabular}
\caption{An optimal schedule for $t=5$ tables, $s=6$ suppliers, $\lceil c/\gamma\rceil=5$ groups of customers (the columns), at most $\sigma=2$ suppliers per table,  in $5$ dinners (the rows).}
\label{tab:s6c5}
\end{table}

\begin{table}[ht!]
\centering
\begin{tabular}{|c|c|c|c|c|}\hline 4 & 6 & 1,5 & 7,8 & 2,3\\ \hline 2,6 & 5,7 & 3 & 1 & 4,8 \\ \hline 7 & 1,3 & 2,8 & 4,5 & 6\\ \hline 1,8 & 2,4 & 7 & 3 & 5 \\ \hline 3,5 & 8 & 4,6 & 2 & 1,7 \\ \hline \end{tabular}
\caption{An optimal schedule for $t=5$ tables, $s=8$ suppliers, $\lceil c/\gamma\rceil=5$ groups of customers (the columns), at most $\sigma=2$ suppliers per table, in $5$ dinners (the rows).}
\label{tab:s8c5}
\end{table}

For the three remaining cases $(\lceil c/\gamma\rceil,s)\in\{(3,4),(5,6),(5,8)\}$, we use the explicit solutions\footnote{computed with the help of the constraint programming toolbox of the \texttt{IBM ILOG CPLEX Optimization Studio}.} of Tables~\ref{tab:s4c3}--\ref{tab:s8c5}. The optimality is proven with the help of lower bound $lb_2$.
\end{proof}

For the case $(\lceil c/\gamma\rceil,s)=(2,4)$, we have a solution in three dinners---see in Table~\ref{tab:s4c2}---and it is easy to see that there is no solution in two dinners.

\begin{table}
\centering
\begin{tabular}{|c|c|}\hline 1,2 & 3,4 \\ \hline 3 & 1 \\ \hline 4 & 2 \\ \hline \end{tabular}
\caption{An optimal schedule for $t=2$ tables, $s=4$ suppliers, $\lceil c/\gamma\rceil=2$ groups of customers (the columns), at most $\sigma=2$ suppliers per table, in $3$ dinners (the rows).}
\label{tab:s4c2}
\end{table}

\subsubsection*{Remark} It becomes clear that a general exact solution for the present problem is out of reach: such a solution would describe under which conditions objects generalizing Howell designs with more than two numbers in each cell would exist---a difficult topic which is still under investigation.

\subsection{Case $t=\lceil s/2\rceil$, $\sigma=2$, and $\lceil c/\gamma\rceil\geq \frac{3}{2}s$}

\begin{proposition}\label{prop:cas-par}
Suppose $t=\lceil s/2\rceil$, $\sigma=2$, and $\lceil c/\gamma\rceil\geq\frac{3}{2}s$.

If $s=2t$, then the optimal number of dinners is $2\left\lceil\frac c \gamma\right\rceil-s+1$.

If $s=2t-1$, then the optimal number of dinners is $\left \lceil \left\lceil\frac c \gamma\right\rceil \frac {s} t  -\frac{1}{t} \right\rceil  +3-2t$.
\end{proposition}

\begin{proof}
We first write the proof for the case $s=2t$. 

We prove that there is a feasible solution matching this number of dinners.

Let $c'=\lceil c/\gamma\rceil$. We split the whole set of customers into $c'$ groups, each of them with at most $\gamma$ customers. Consider an optimal schedule with $s$ suppliers and $s-1$ groups of customers and derive from it a feasible schedule in $s-1$ dinners (Proposition~\ref{prop:dinHow}). At the end of these $s-1$ dinners, we have $s-1$ groups among the $c'$ groups that have met all suppliers. For the remaining $c'-s+1$ groups, we use a feasible solution in $\lceil\frac s t (c'-s+1)\rceil=2c'-2s+2$ dinners, given by Proposition~\ref{prop:sigma1}. The suppliers being alone in such a solution, we get a feasible solution for the whole collection of $c'$ groups in $2c'-s+1$ dinners. 

This number of dinners is optimal according to the lower bound $lb_5$ of Theorem~\ref{thm:lower} with $j=2$.\\

Let us deal with the case $s=2t-1$. Adding a fictitious supplier, we have $s'=s+1$ suppliers and we get similarly as above a solution in $s'-1+\lceil\frac s t (c'-s'+1)\rceil$ dinners. Note that the $c'-s'+1$ groups of customers have $s$ suppliers to meet and we get a $\frac s t$ term and not a $\frac{s'} {t}$ one. Replacing $s'$ and $s$ by their expressions in $t$, we get a solution in $\lceil\frac{c's} t +3-\frac{1}{t}-2t\rceil$ dinners.

The lower bound $lb_5$ of Theorem~\ref{thm:lower} with $j=2$ is in this case $\lceil\frac {c's} t -\frac{4t^2-6t+2}{2t}\rceil=\lceil\frac {c's} t +3-\frac{1}{t}-2t\rceil$ and proves the optimality of this solution.
\end{proof}

\subsection{Case $t=\gamma=1$, $c\leq p\leq\sigma$, $s=p^2$ with $p$ prime}\label{subsec:t1} 

\begin{proposition}
 If $t=\gamma=1$, $c\leq p\leq\sigma$, $s=p^2$ with $p$ prime, the optimal number of dinners is $pc$. 
\end{proposition}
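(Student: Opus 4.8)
The plan is to establish matching lower and upper bounds, both equal to $pc$. The essential feature of this case is that every generic bound of Theorem~\ref{thm:lower} turns out to be too weak (I explain why at the end), so the proof must rest on a dedicated structural argument rather than on $lb_1,\dots,lb_5$.

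First I would reformulate the instance. Since $t=\gamma=1$, every dinner consists of the single table hosting exactly one customer together with a set of at most $\sigma$ suppliers. For a fixed customer $k$, the supplier-sets he meets over the schedule are pairwise disjoint and cover all $p^2$ suppliers, since each supplier sits with $k$ exactly once; they therefore form a partition $\pi_k$ of the supplier set into, say, $m_k$ parts, and the total number of dinners is $D=\sum_{k=1}^c m_k$. The only coupling between distinct customers comes from the rule that two suppliers meet at most once: if a part $P\in\pi_k$ and a part $Q\in\pi_l$ with $k\neq l$ shared two suppliers $x,y$, then $x$ and $y$ would sit together both at $k$'s dinner and at $l$'s dinner, a contradiction. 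Hence $|P\cap Q|\leq 1$ whenever the parts come from different customers.

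For the lower bound I would fix two customers $k\neq l$ and consider the map sending a supplier $x$ to the pair $(P_k(x),P_l(x))$ of parts of $\pi_k$ and $\pi_l$ containing it. By the intersection property just noted, this map is injective, whence $p^2\leq m_k m_l$. It then remains to minimize $\sum_k m_k$ over positive reals subject to $m_im_j\geq p^2$ for all $i\neq j$. Writing $a=\min_k m_k$, either $a\geq p$, in which case $\sum_k m_k\geq cp$ immediately, or $a<p$, in which case the constraint forces $m_l\geq p^2/a$ for each of the remaining $c-1$ customers, so $\sum_k m_k\geq a+(c-1)p^2/a=\phi(a)$; since $\phi'(a)=1-(c-1)p^2/a^2\leq 0$ on $(0,p]$ when $c\geq 2$, the map $\phi$ is decreasing there and $\phi(a)\geq\phi(p)=cp$. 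Thus $D\geq pc$ in all cases. (For $c=1$ there is no pair constraint; this degenerate instance falls under the trivial regime $c\leq\gamma$ of Section~\ref{subsec:cleqgamma}.)

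For the upper bound I would use that $p$ is prime to invoke the affine plane $AG(2,p)$: identifying the suppliers with $\mathbb{F}_p^2$, this structure carries $p+1$ parallel classes of lines, each class being a partition into $p$ lines of size $p$, with two lines from different classes meeting in exactly one point. As $c\leq p<p+1$, I assign to each customer a distinct parallel class and let his $p$ dinners be the $p$ lines of that class, each of size $p\leq\sigma$ and thus feasible on the single table. Every customer then meets each supplier exactly once, and since two suppliers lie on a unique line they sit together for at most one customer, so they meet at most once. This schedule uses exactly $pc$ dinners, matching the lower bound. The main obstacle, and the reason the bounds of Theorem~\ref{thm:lower} do not suffice, is precisely that they ignore this global orthogonality between customers: for $\sigma>p$ and $c<p$ the bound $lb_3$ is of order $cp^2/\sigma$ and $lb_5$ is of order $c^2$, both strictly below $pc$ in this range. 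The crux is therefore the inequality $m_km_l\geq p^2$, which encodes the pairwise intersection condition exactly, together with the short optimization of $\sum_k m_k$; the primality of $p$ is then used only to produce, via $AG(2,p)$, the matching feasible schedule.
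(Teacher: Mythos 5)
Your proof is valid, but it is half the paper's proof and half a genuinely different one, and your framing claim about the paper's lower bounds is mistaken. On the upper-bound side you and the paper construct the same schedule: under the identification of supplier $j+pr$ with the point $(j,r)\in\mathbb{F}_p^2$, the paper's matrices $M^{(k)}_{i,j}=j+p(kj-j-k+i)\bmod p^2$ are precisely the parallel class of lines of slope $k-1$ in $AG(2,p)$, one class per customer; your abstract formulation is the same design, with the advantage that the ``two suppliers meet at most once'' check reduces to the axiom that two points lie on a unique line, where the paper runs an explicit divisibility computation using the primality of $p$. On the lower-bound side you genuinely diverge: the paper gives no dedicated structural argument but simply observes that the schedule matches $lb_4$ of Theorem~\ref{thm:lower}, which for $t=\gamma=1$, $s=p^2$ and $c\geq 2$ evaluates exactly to $pc$ (the term $\max(\sqrt{1/(c-1)},1)$ equals $1$, and the bracket becomes $p((c-1)+1)$). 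So your opening assertion that ``every generic bound of Theorem~\ref{thm:lower} turns out to be too weak'' is false: at the end you check $lb_1,lb_2,lb_3,lb_5$ but overlook $lb_4$, which does not involve $\sigma$ at all and is tight here — and the paper's proof of $lb_4$ in Proposition~\ref{prop:counting} is itself a counting argument in much the same spirit as yours. Your replacement argument is nonetheless correct and self-contained: the disjointness of each customer's supplier-sets, the cross-intersection bound $|P\cap Q|\leq 1$, the injection $x\mapsto(P_k(x),P_l(x))$ yielding $m_km_l\geq p^2$, and the monotonicity of $\phi(a)=a+(c-1)p^2/a$ on $(0,p]$ when $c\geq 2$ are all sound, and $D\geq\sum_k m_k$ holds because $t=\gamma=1$ forces distinct dinners for distinct customers; what it buys is an explicit exposure of the ``pairwise orthogonal partitions'' structure that forces optimality, rather than the generic $\ell(z)$ optimization behind $lb_4$. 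Your treatment of $c=1$ is in fact more careful than the paper's: since $lb_4$ is only asserted for $\gamma<c$, the paper's optimality claim also tacitly needs $c\geq 2$, and for $c=1$ with $\sigma$ large (e.g.\ $\sigma=p^2$, where one dinner suffices) the optimum is $\lceil p^2/\sigma\rceil<pc$, so quarantining that degenerate case, as you do, is genuinely necessary.
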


\begin{proof}
We define the following $p\times p$ matrices $M^{(1)},\ldots, M^{(c)}$ where $$M_{i,j}^{(k)}=j+p(kj-j-k+i)\mbox{ mod }p^2\quad\mbox{for $i,j\in\{1,\ldots,p\}$ and for $k\in\{1,\ldots,c\}$.}$$ For a $k\in\{1,\ldots,c\}$, let $i,j,i',j'$ be in $\{1,\ldots,p\}$. If we have $M_{i,j}^{(k)}=M_{i',j'}^{(k)}\mbox{ mod }p^2$, then we get first that $j=j'$ (by counting modulo $p$), then that $i=i'$. It means that the numbers appearing in a $M^{(k)}$ are all distinct modulo $p^2$, and hence that all the numbers from $1$ to $1+p^2$ modulo $p^2$ appear in such a matrix since there are $p^2$ entries. 

We build now a feasible schedule in $pc$ dinners as follows. For each $k$, the matrix $M^{(k)}$ encodes $p$ dinners with customer $k$ at the unique table: each row provides the suppliers present at a dinner. According to the remark above, each customer meets all suppliers. It remains to check that two suppliers eat at most once together. Assume that it is not the case. Because all numbers in a $M^{(k)}$ are distinct, we know that two suppliers eat at most once together, when one considers the dinners with a given customer. Therefore, if two suppliers eat at least twice together we have simultaneously $M_{i,j_1}^{(k)}=M_{i',j'_1}^{(k')}$ and $M_{i,j_2}^{(k)}=M_{i',j'_2}^{(k')}$ (counted modulo $p^2$) for some $i,i',j_1,j'_1,j_2,j'_2\in\{1,\ldots,p\}$ with $j_1\neq j_2$ and $j'_1\neq j'_2$, and distinct $k$ and $k'$ in $\{1,\ldots,c\}$. We get first that $j_1=j'_1$ and that $j_2=j_2'$ (by counting modulo $p$). Then we get that $p^2$ divides $p(i-i'+kj_1-k'j_1+k'-k)$ and $p(i-i'+kj_2-k'j_2+k'-k)$, which means that $p$ divides $(k-k')(j_1-j_2)$. Since $p$ is prime, $|k-k'|<p$ and $|j_1-j_2|<p$, we get a contradiction.

The optimality of the solution is clear since it matches $lb_4$ of Theorem~\ref{thm:lower}.
\end{proof}

\section{Feasible solutions}\label{sec:upper}

The main purpose of this section is to prove Theorem~\ref{thm:upper}.

\subsection{Domination results}

Let us denote $r(t,s,c,\sigma,\gamma)$ the optimal solution of the problem. Obviously, it is an non-decreasing map in $s$ and in $c$.
It is also a non-increasing map in $t$, in $\sigma$, and in $\gamma$, but concerning these quantities, we can say a little bit more.

\begin{equation}\label{eq:t}
r(t_1,s,c,\sigma,\gamma)\leq \left\lceil\frac{t_2}{t_1}\right\rceil r(t_2,s,c,\sigma,\gamma)\qquad\mbox{for any $t_1$ and $t_2$}\, .
\end{equation}

Indeed, if $t_1\geq t_2$, then any solution with $t_2$ tables is also a solution with $t_1$ tables ($r$ is a non-increasing map).
And if $t_2\geq t_1$, we can split the set of tables into $\left\lceil\frac{t_2}{t_1}\right\rceil$ groups of $t_1$ tables, and use any solution of the problem with $t_2$ tables to build a solution with $t_1$ tables, each group corresponding to a distinct dinner.

With the same kind of reasoning, we get

\begin{equation}\label{eq:sigma}
r(t,s,c,\sigma_1,\gamma)\leq \left\lceil\frac{\sigma_2}{\sigma_1}\right\rceil r(t,s,c,\sigma_2,\gamma)\qquad\mbox{for any $\sigma_1$ and $\sigma_2$}\, .
\end{equation}

Making groups of at most $\gamma_1$ customers leads to
\begin{equation}\label{eq:gamma}
r(t,s,c,\sigma,\gamma_2)\leq r(t,s,\lceil c/\gamma_1\rceil,\sigma,\lceil\gamma_2/\gamma_1\rceil)\qquad\mbox{if $\gamma_1\leq\gamma_2$}\, .
\end{equation}

Finally, we have also the following relation.
\begin{equation}\label{eq:s}
r(t,s,c,\sigma,\gamma)\leq r(t,s_1,c,\sigma,\gamma)+r(t,s_2,c,\sigma,\gamma)\qquad\mbox{for any $s_1$ and $s_2$ s.t. $s_1+s_2=s$} \, .
\end{equation}

\subsection{Explicit solutions}

\begin{proof}[Proof of Theorem~\ref{thm:upper}]
We get $ub_1$ as follows. 

 Let $t_1=t$, $t_2=\min\left(\left\lceil\frac c \gamma\right\rceil,s\right)$, $\sigma_1=\sigma$ and $\sigma_2=2$.  Suppose that $(\lceil c/\gamma\rceil,s) \neq (2,4)$. We have then $r(t_2,s,c,\sigma_2,\gamma)\leq\max\left(\left\lceil\frac c {\gamma}\right\rceil,\left\lceil \frac s  2\right\rceil\right)$. Indeed, if $s>c/\gamma$, we use Proposition~\ref{prop:dinHow}; and if $s\leq c/\gamma$, we use Proposition~\ref{prop:sigma1} combined with the inequality $r(t_2,s,c,2,\gamma)\leq r(t_2,s,c,1,\gamma)$, which follows from Equation~\eqref{eq:sigma}. 
Equations~\eqref{eq:t} and~\eqref{eq:sigma} are then used to conclude.

Suppose now that $(\lceil c/\gamma\rceil,s) = (2,4)$. We have then $r(t_2,s,c,\sigma_2,\gamma) = 3$, as noted at the end of Section~\ref{subsec:howell}, and we finish as above.
\\

To get $ub_2$, we first define a new business dinner problem with $t'=\lceil s/\sigma \rceil$ tables, $s'=t'\sigma$ suppliers, $\lceil c/\gamma\rceil$ customers, at most $\sigma$ suppliers per table and at most $1$ customer per table. For this problem, a feasible solution consists in taking $t'$ customers, and in putting each of them at a separate table. The first evening, each of these $t'$ customers eats with $\sigma$ distinct suppliers. The $s'-\sigma$ following evenings, each of these $t'$ customers eats with one of the $s'-\sigma$ suppliers he has not yet eaten with. It is easy to schedule these dinners. For the $\lceil c/\gamma\rceil-t'$ remaining customers, we build a solution with exactly one supplier per table in $\sigma\max(\lceil c/\gamma\rceil-t',t')$ dinners (the schedule follows from Proposition~\ref{prop:sigma1}).
We get therefore a feasible solution in $1+s'-\sigma+\sigma\max(\lceil c/\gamma\rceil-t',t')$ dinners:
$$r(t',s',\lceil c/\gamma\rceil,\sigma,1)\leq 1-\sigma+\sigma\max(\lceil c/\gamma\rceil,2t')\, .$$

Combining this solution with the monotony of $r$ in $s$, Equation~\eqref{eq:t} and Equation~\eqref{eq:gamma} (for $\gamma_2=\gamma_1=\gamma$), we get $ub_2$.\end{proof}
${}$\\

Actually, if $s>c/\gamma$, the upper bound $ub_1$ can be improved with the same proof (Proposition~\ref{prop:dinHow} can be used in a tighter way) by 
$$\left\lceil\frac 2 {\sigma} \right\rceil\left\lceil\frac 1 t \min\left(\left\lceil\frac c \gamma\right\rceil,\left\lceil \frac s  2\right\rceil\right)\right\rceil\max\left(\left\lceil\frac c {\gamma}\right\rceil,\left\lceil \frac s  2\right\rceil\right)\, .$$\\

The upper bound $ub_2$ leads to feasible solutions even if $s/\sigma>c/\gamma$. Indeed, we can make the Euclidean division of $\lceil s/\sigma\rceil$ by $\lceil c/\gamma\rceil$. Let us write $$\lceil s/\sigma\rceil = q\lceil c/\gamma\rceil+\rho\quad\mbox{with $0\leq\rho<\lceil c/\gamma\rceil$.}$$ Making $q$ groups of at most $\sigma\lceil c/\gamma\rceil$ suppliers each and one group of at most $\sigma\rho$ suppliers, and with the help of Equation~\eqref{eq:s}, we get a solution in at most 
\begin{equation}\label{eq:eucli}q\left\lceil\frac{1}{t}\left\lceil\frac{c}{\gamma}\right\rceil\right\rceil\left(1-\sigma+2\sigma\left\lceil\frac{c}{\gamma}\right\rceil\right)+\left\lceil\frac{\rho}{t}\right\rceil\left(1-\sigma+2\sigma\max\left(\left\lceil\frac{c}{\gamma}\right\rceil,2\rho\right)\right)\end{equation} dinners.

\section{Some open questions}\label{sec:open}

\subsection{Improving the bounds}

For many instances, the ratio (best upper bound) / (best lower bound) is low. It is however possible to make this ratio arbitrarily high as follows. Set $s:=x$ and $\sigma:=\sqrt{x}$ and let $x\rightarrow +\infty$. The parameters $c$, $\gamma$, and $t$ are considered as constant. The notation $g=O(f)$ means that $g/f$ is asymptotically bounded above by a constant, whereas the notation $g=\Omega(f)$ means that $g/f$ is asymptotically bounded below by a constant.

Then $lb_2$ is constant. The lower bound $lb_5$ goes to something negative: it can be checked that $j^*=\Omega(x)$ and therefore the maximum is reached on $\sigma$ when $x$ is sufficiently high. The three others lower bounds are $O(\sqrt{x})$.

We have $ub_1=\left\lceil\frac 1 t \left\lceil\frac c \gamma\right\rceil\right\rceil\left\lceil \frac x  2\right\rceil$ when $x$ is sufficiently high. For the other upper bound, using Equation~\eqref{eq:eucli}, we get $ub_2\sim \frac{2\left\lceil\frac c \gamma\right\rceil-1}{\left\lceil\frac c \gamma\right\rceil}\left\lceil\frac 1 t \left\lceil\frac c\gamma\right\rceil\right\rceil x$.

The ratio is therefore a $\Omega(\sqrt{x})$. \\

A question is whether it is possible to improve the lower and upper bounds in order to get a ratio bounded above by a constant.

\subsection{Making groups}

The most intriguing open question is the following, as a positive answer seems intuitively correct, 
\begin{quote}
{\em Is there always an optimal solution in which the customers are split into groups, the members of each group staying together for all dinners?}
\end{quote} 
Indeed, this property is satisfied in all optimal or good solutions proposed in the present paper. An alternative formulation is whether Equation~\eqref{eq:gamma} is actually an equality when $\gamma_1=\gamma_2$. \\

\subsection{One table and at most one customer per table}

An open question which may be tractable is the case with only one table and at most one customer at each table. A partial answer is given Section~\ref{subsec:t1} and we were not able to deal with the general case.

\bibliographystyle{amsplain}
\bibliography{Dinners}

\end{document}